\newtheorem{theorem}{Theorem}[section]
\newtheorem{lemma}[theorem]{Lemma}
\theoremstyle{definition}
\newtheorem{conj}[theorem]{Conjecture}
\theoremstyle{definition}
\theoremstyle{definition}
\theoremstyle{definition}
\theoremstyle{definition}
\theoremstyle{definition}
\theoremstyle{definition}
\theoremstyle{definition}
\newcommand\ex{\ensuremath{\mathrm{ex}}}
\title{On the number of $K_4$-saturating edges}
\author{
J\'ozsef Balogh
\thanks{Department of Mathematical Sciences,
University of Illinois at Urbana-Champaign, Urbana, Illinois 61801, USA {\tt jobal@math.uiuc.edu}. Research is partially supported by Simons Fellowship, NSF CAREER Grant
DMS-0745185 and Arnold O.
Beckman Research Award (UIUC Campus Research Board 13039).} 
\quad\quad
Hong Liu
\thanks{Department of Mathematical Sciences,
University of Illinois at Urbana-Champaign, Urbana, Illinois 61801, USA {\tt hliu36@illinois.edu}.}
}
\begin{document}
\maketitle

\begin{abstract}
Let $G$ be a $K_4$-free graph, an edge in its complement is a $K_4$-\emph{saturating} edge if the addition of this edge to $G$ creates a copy of $K_4$. Erd\H{o}s and Tuza conjectured that for any $n$-vertex $K_4$-free graph $G$ with $\lfloor n^2/4\rfloor+1$ edges, one can find at least $(1+o(1))\frac{n^2}{16}$ $K_4$-saturating edges. We construct a graph with only $\frac{2n^2}{33}$ $K_4$-saturating edges. Furthermore, we prove that it is best possible, i.e., one can always find at least $(1+o(1))\frac{2n^2}{33}$ $K_4$-saturating edges in an $n$-vertex $K_4$-free graph with $\lfloor n^2/4\rfloor+1$ edges.
\end{abstract}
%%%%%%%%%%%%%%%%%%%%%%%%%%%%%%%%%%%%%%%%%%%%%%%%%%%%%
%%%%%%%%%%%%%%%%%%%%%%%%%%%%%%%%%%%%%%%%%%%%%%%%%%%%%
%%%%%%%%%%%%%%%%%%%%%%%%%%%%%%%%%%%%%%%%%%%%%%%%%%%%%
%%%%%%%%%%%%%%%%%%%%%%%%%%%%%%%%%%%%%%%%%%%%%%%%%%%%%
%%%%%%%%%%%%%%%%%%%%%%%%%%%%%%%%%%%%%%%%%%%%%%%%%%%%%
\section{Introduction}
The notation in this paper is standard. For a graph $G$, denote by $\overline{G}$ its complement. For vertex subsets $U,W\subseteq V(G)$, denote $N(U):=\bigcap_{v\in U} N(v)$ and $E(U,W)$ the set of cross edges between $U$ and $W$.

Mantel~\cite{Mantel} showed that the maximum number of edges in an $n$-vertex triangle-free graph is $\lfloor n^2/4\rfloor$. Rademacher (unpublished) extended this result by showing that any $n$-vertex graph with $\lfloor n^2/4\rfloor+t$ edges contains at least $t\lfloor n/2\rfloor$ triangles, for $t=1$. Lov\'asz and Simonovits~\cite{L-S}, improving Erd\H{o}s~\cite{E-R},  proved this for every $t\le n/2$. Erd\H{o}s~\cite{E-clique} showed analogue results for cliques and Mubayi~\cite{Mu,Mu2} for color-critical graphs and for some hypergraphs.

In general, we call Erd\H{o}s-Rademacher-type problem the following: for any extremal question, what is the number of forbidden configurations appearing in a graph somewhat denser than the extremal graph? This type of problems have been studied in various contexts: A \emph{book} of size $q$ consists of $q$ triangles sharing a common edge. Khad\v{z}iivanov and Nikiforov~\cite{K-N}, answering a question of Erd\H{o}s, showed that any $n$-vertex graph with $\lfloor n^2/4\rfloor+1$ edges contains a book of size at least $n/6$. In the context of Sperner's Theorem, Kleitman~\cite{Kl}, answering a question of Erd\H{o}s and Katona, determined the minimum number of $2$-chains a poset must contain if its size is larger than its largest anti-chain. Recently, this theorem was extended to $k$-chains by Das, Gan and Sudakov~\cite{D-G-S}.

Let $G$ be an $n$-vertex $K_4$-free graph, an edge in $\overline{G}$ is a $K_4$-\emph{saturating} edge if the addition of this edge to $G$ creates a copy of $K_4$. Denote by $f(G)$ the number of $K_4$-saturating edges in $\overline{G}$ and by $f(n,e)$ the maximum integer $\ell$ such that every $n$-vertex $K_4$-free graph with $e$ edges must have at least $\ell$ $K_4$-saturating edges. The first extremal result related to clique-saturating edges was by Bollob\'as~\cite{BB} who proved that if every edge in $\overline{G}$ is a $K_r$-saturating edge, then $e(G)\ge{n\choose 2}-{n-r+2\choose 2}$ and this bound is best possible. Later it was extended by Alon~\cite{A}, Frankl~\cite{F} and Kalai~\cite{K} using linear algebraic method. Recently, saturation problems were phrased in the language of `graph bootstrap percolation', see~\cite{BBM} and~\cite{BBMR}.

In the case of $K_4$, Bollob\'as' example is the following: let $F$ be an $n$-vertex $K_4$-free graph with two vertices adjacent to all other vertices which form an independent set. This graph has only linear many edges, $e(F)=2n-3$, and yet all edges in $\overline{F}$ are $K_4$-saturating edges. To the other extreme, $K_{\lceil n/2\rceil,\lfloor n/2\rfloor}$ shows that a graph could have up to $\lfloor n^2/4\rfloor$ edges with no $K_4$-saturating edge, i.e. $f(n,\lfloor n^2/4\rfloor)=0$. Erd\H{o}s and Tuza~\cite{E-conj} conjectured that if a $K_4$-free graph $G$ has $\lfloor n^2/4\rfloor+1$ edges, then suddenly there are quadratic many $K_4$-saturating edges. This conjecture can be considered, as formulated before, an Erd\H{o}s-Rademacher-type problem concerning the number of $K_4$-saturating edges.
\begin{conj}[Erd\H{o}s-Tuza~\cite{E-conj}]
$$f(n,\lfloor n^2/4\rfloor+1)=(1+o(1))\frac{n^2}{16}.$$
\end{conj}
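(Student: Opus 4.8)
The plan is to prove the two matching bounds separately, after a reformulation that makes the quantity $f(G)$ transparent. An edge $uv\in\overline{G}$ is $K_4$-saturating precisely when $N(u)\cap N(v)$ contains an edge of $G$. Dually, for every edge $xy\in E(G)$ the common neighborhood $N(x)\cap N(y)$ must be an independent set, since a pair of adjacent common neighbors would complete a $K_4$ on $\{x,y\}$; hence every pair inside $N(x)\cap N(y)$ is a non-edge and therefore a saturating edge. Consequently the set of saturating edges is exactly the union of the $2$-subsets of these common neighborhoods, and
$$f(G)=\Big|\bigcup_{xy\in E(G)}\binom{N(x)\cap N(y)}{2}\Big|.$$
I would attack the upper bound (a construction realizing $(1+o(1))\tfrac{n^2}{16}$) and the lower bound (every such $G$ achieves at least this many) by, respectively, optimizing a family of extremal candidates and running a supersaturation-style counting argument on this union.

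For the upper bound the idea is to force all the common neighborhoods to coincide on one small independent \emph{core}, so that the union collapses onto $\binom{C}{2}$ for a set $C$ as small as possible. Concretely, I would take an independent core $C$ and attach the remaining vertices through a triangle-free bipartite \emph{filler} chosen so that (i) the whole graph stays $K_4$-free, (ii) the edge count is exactly $\lfloor n^2/4\rfloor+1$, and (iii) every edge lying in a triangle has its common neighborhood contained in $C$. Then all saturating edges fall inside $\binom{C}{2}$, and $f(G)$ is controlled by $\binom{|C|}{2}$ plus a lower-order contribution from cherries in the filler. Choosing the part sizes optimally (equivalently, optimizing a weighted blow-up of a small $K_4$-free template against the edge constraint) is expected to drive the minimum of this family down to $(1+o(1))\tfrac{n^2}{16}$.

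For the lower bound I would argue that many edges are forced to carry large, nearly disjoint common neighborhoods. Since $e(G)=\lfloor n^2/4\rfloor+1$ exceeds the Mantel bound, the Khad\v{z}iivanov--Nikiforov theorem quoted above already produces a book of size $\ge n/6$, i.e.\ an edge $xy$ with $|N(x)\cap N(y)|\ge n/6$, contributing $\ge\binom{n/6}{2}$ saturating edges. To boost this to the full $\tfrac{n^2}{16}$ I would globalize the estimate: count diamonds ($K_4$ minus an edge), which equals $\sum_{xy\in E(G)}\binom{t(xy)}{2}$ with $t(xy)=|N(x)\cap N(y)|$, bound this sum below by convexity and Cauchy--Schwarz applied to the triangle degrees $t(xy)$, and then pass from this incidence count to the size of the union by controlling the multiplicity $m(uv)$ with which a single saturating edge $uv$ is witnessed (via $|S|\ge(\sum_{xy}\binom{t(xy)}{2})^2/\sum_{uv}m(uv)^2$). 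A stability input---that $G$ is close to the optimal construction unless it already carries many saturating edges---would be used to keep these multiplicities, hence the overlaps, under control.

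The main obstacle is matching the two bounds at the exact leading constant. Extracting a weak quadratic lower bound from a single book is routine, but pinning the constant requires a tight understanding of which $K_4$-free graphs with $\lfloor n^2/4\rfloor+1$ edges minimize the union $\bigcup_{xy}\binom{N(x)\cap N(y)}{2}$, and this rests entirely on the overlap structure of the common neighborhoods. The delicate step is the optimization of the extremal family in the construction: the correct leading constant is whatever value that optimization produces, and verifying that the counting lower bound can be pushed all the way up to meet it---rather than stalling below $\tfrac{n^2}{16}$---is where essentially all the difficulty lies.
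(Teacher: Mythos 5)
You are attempting to prove a statement that is false, and the paper in question exists precisely to disprove it: the true value is $f(n,\lfloor n^2/4\rfloor+1)=\frac{2n^2}{33}+O(n)$, and $\frac{2}{33}<\frac{1}{16}$. Your reformulation $f(G)=\bigl|\bigcup_{xy\in E(G)}\binom{N(x)\cap N(y)}{2}\bigr|$ is correct, and the upper-bound strategy you describe---optimizing weighted blow-ups of a small $K_4$-free template subject to the edge constraint---is exactly the right move; the problem is that this optimization does \emph{not} bottom out at $\frac{1}{16}$. Blowing up the $5$-cycle $v_1v_2v_3v_4v_5$ together with the chord $v_1v_3$, with independent parts of sizes $|V_1|=|V_3|=\frac{16n}{66}$, $|V_2|=\frac{4n}{66}+1$, $|V_4|=\frac{15n}{66}$, $|V_5|=\frac{15n}{66}-1$, yields a $K_4$-free graph with $\frac{n^2}{4}+\frac{n}{66}$ edges whose only saturating edges are the pairs inside $V_1$, $V_2$, $V_3$ (the parts spanning the unique triangle of the template): pairs inside or across $V_4,V_5$ have common neighborhoods meeting only non-adjacent template classes, so they saturate nothing. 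This gives $f(H)=2\binom{16n/66}{2}+\binom{4n/66+1}{2}=\frac{2n^2}{33}-\frac{7n}{33}$, strictly below $(1+o(1))\frac{n^2}{16}$. So your hope that the family of constructions ``is expected to drive the minimum down to $(1+o(1))\frac{n^2}{16}$'' fails: the minimum over blow-ups is $\frac{2n^2}{33}$, and your planned lower bound of $(1+o(1))\frac{n^2}{16}$ cannot be established by any argument, since this single graph refutes it. Note also that your concrete starting point for the lower bound---the Khad\v{z}iivanov--Nikiforov book of size $n/6$---only yields $\binom{n/6}{2}\approx \frac{n^2}{72}$, and the amplification step you sketch (diamond counting plus second-moment control of multiplicities) has no mechanism that would single out the constant $\frac{1}{16}$; this is a symptom, not the cause, of the failure.

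For comparison, the paper's matching lower bound of $\frac{2n^2}{33}-\frac{3n}{11}$ (for $K_4$-free graphs with $\lfloor n^2/4\rfloor$ edges containing a triangle) is organized around a maximum family $\mathcal{T}$ of $tn$ vertex-disjoint triangles rather than around books or diamonds. Saturating edges incident to $V(\mathcal{T})$ are counted by observing that each vertex with two neighbors in a triangle of $\mathcal{T}$ supplies one (Lemma~\ref{r1}); then a triangle $T=\{x,y,z\}$ sending many edges to the triangle-free remainder $G'$ is selected (Lemma~\ref{bigtriangle}), and the sets $A=N_{G'}(xy)$, $B=N_{G'}(yz)$, $C=N_{G'}(xz)$ are exploited: they are pairwise disjoint independent sets, and in the $3$-joint-book case the maximality of $\mathcal{T}$ even forces $N_{G'}(x),N_{G'}(y),N_{G'}(z)$ to be independent, which is what makes the count close. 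The final bound comes from optimizing over the packing density $t$, with the minimum attained near $t=\frac{2}{33}$---this is where the unusual constant comes from, and it is invisible to any approach that presumes the answer is $\frac{1}{16}$. If you reorient your write-up as a disproof, your first two paragraphs (the reformulation and the blow-up optimization) are salvageable as stated; the third must be discarded.
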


We disprove this conjecture. We give a counterexample with only $\frac{2n^2}{33}$ $K_4$-saturating edges. Furthermore, we prove that $(1+o(1))\frac{2n^2}{33}$ is best possible, that is, one can always find at least $(1+o(1))\frac{2n^2}{33}$ $K_4$-saturating edges in an $n$-vertex $K_4$-free graph with $\lfloor n^2/4\rfloor+1$ edges.

\begin{theorem}\label{2/33}
For $n\ge 73$,
$$f(n, \lfloor n^2/4\rfloor+1)=\frac{2n^2}{33}+O(n).$$
\end{theorem}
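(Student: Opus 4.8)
The engine for both directions is a reformulation. Since $G$ is $K_4$-free, for every edge $uv\in E(G)$ the common neighbourhood $M_{uv}:=N(u)\cap N(v)$ is an \emph{independent} set (an edge inside it would close a $K_4$ on $\{u,v\}$), and a non-edge $\{x,y\}$ is $K_4$-saturating exactly when $x,y$ lie together in some $M_{uv}$. Hence
$$f(G)=\Bigl|\bigcup_{uv\in E(G)}\binom{M_{uv}}{2}\Bigr|,$$
the number of non-edges covered by the ``books'' $\binom{M_{uv}}{2}$. The whole problem thus becomes: how small can this union be for a $K_4$-free graph with $\lfloor n^2/4\rfloor+1$ edges, and how small must it always be.

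For the upper bound I would exhibit a weighted blow-up of the fixed $5$-vertex graph $H$ on $\{1,2,3,4,5\}$ with edges $\{12,13,23,14,25,45\}$: a triangle $123$ with a gadget $4,5$ attached so that $4\sim1$, $5\sim2$, $4\sim5$. Replacing vertex $i$ by an independent set $V_i$ of size $p_in$ and each $H$-edge by a complete bipartite graph gives a $K_4$-free graph, since the only triangle of $H$ is $123$ and neither $N_H(4)=\{1,5\}$ nor $N_H(5)=\{2,4\}$ spans an edge, so no blow-up $K_4$ appears and the tripartite core $K_{V_1,V_2,V_3}$ is itself $K_4$-free. The only edges with a non-trivial common neighbourhood are those of the core, whose common neighbourhoods are exactly $V_3,V_1,V_2$; these three sets are disjoint and no other saturating pairs occur, so $f=\tfrac12(p_1^2+p_2^2+p_3^2)n^2+O(n)$. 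Minimising this subject to $\sum_{ij\in E(H)}p_ip_j=\tfrac14$ and $\sum_ip_i=1$, a short Lagrange computation yields the optimum $p_1=p_2=\tfrac{8}{33}$, $p_3=\tfrac{2}{33}$, $p_4=p_5=\tfrac{5}{22}$, at which $\tfrac12(p_1^2+p_2^2+p_3^2)=\tfrac{2}{33}$; tuning the part sizes by $O(1)$ to hit exactly $\lfloor n^2/4\rfloor+1$ edges perturbs $f$ by $O(n)$, giving $f(n,\lfloor n^2/4\rfloor+1)\le\tfrac{2n^2}{33}+O(n)$.

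The lower bound is the crux: one must show this blow-up is optimal, i.e. the displayed union always covers at least $(\tfrac{2}{33}-o(1))n^2$ pairs. I would first seed the structure by invoking the book theorem of Khad\v{z}iivanov and Nikiforov to locate an edge of codegree $\ge n/6$, providing a large independent common neighbourhood. I would then set up a reduction to the finite quadratic program solved above: partition $V(G)$ according to how vertices attach to this core, pass to a bounded weighted ``reduced'' graph via symmetrization (replacing a vertex by a twin of a suitable neighbour keeps $G$ $K_4$-free and does not decrease the edge count), and argue that any configuration with $f(G)<(\tfrac{2}{33}-\ep)n^2$ must mimic a blow-up of a small $K_4$-free template, whence the program forces $f\ge\tfrac{2}{33}n^2-O(n)$ at exactly $n^2/4+1$ edges.

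The main obstacle lies precisely in this reduction, and it cannot be bypassed by standard stability: the extremal graph is \emph{not} near-bipartite (its core $K_{V_1,V_2,V_3}$ carries $\Theta(n^2)$ edges), so a $K_4$-free graph with $\lfloor n^2/4\rfloor+1$ edges is genuinely far from the Mantel extremiser. Consequently the delicate points are (i) controlling overlaps, i.e. ruling out configurations that reuse the same saturating pairs as $M_{uv}$ for many different edges $uv$ while still reaching $n^2/4+1$ edges, and (ii) showing the symmetrization step never lowers the ``second-order'' union quantity $f$ below $\tfrac{2}{33}n^2$. Establishing that the attachment of the high-degree (bipartite) vertices to the tripartite core cannot do better than the $5$-vertex gadget $H$ is where the real work, and any case analysis, will concentrate.
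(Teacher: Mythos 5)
Your upper bound is correct and is the same construction as the paper's: your template $H$ on $\{1,2,3,4,5\}$ with edges $\{12,13,23,14,25,45\}$ is isomorphic to the paper's $C_5$ with a chord (a triangle with two pendant path-vertices attached to distinct triangle vertices), and your weights $p_1=p_2=\tfrac{8}{33}$, $p_3=\tfrac{2}{33}$, $p_4=p_5=\tfrac{5}{22}$ are exactly the paper's $|V_1|=|V_3|=\tfrac{16n}{66}$, $|V_2|=\tfrac{4n}{66}$, $|V_4|=|V_5|=\tfrac{15n}{66}$. The verification that the only saturating pairs lie inside the three triangle classes, and the value $\tfrac12(p_1^2+p_2^2+p_3^2)=\tfrac{2}{33}$ at edge density $\tfrac14$, both check out.

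The lower bound, however, is where the theorem lives, and there you have a plan rather than a proof, with the decisive steps missing. Concretely: (i) Zykov-type symmetrization preserves $K_4$-freeness and the edge count, but the quantity $f(G)=\bigl|\bigcup_{uv}\binom{M_{uv}}{2}\bigr|$ is a union of books, not a subgraph count, and it is not monotone under replacing a vertex by a twin --- replacing $u$ by a copy of $v$ rebuilds every book through $u$, and $f$ can go up or down; you give no argument that a hypothetical counterexample survives symmetrization as a counterexample. (ii) Even granting a reduction to weighted templates, you would need the quadratic program over \emph{all} bounded $K_4$-free templates, not just your five-vertex $H$; nothing in the sketch rules out other templates doing better. (iii) The Khad\v{z}iivanov--Nikiforov edge of codegree $\ge n/6$ only yields about $n^2/72$ saturating pairs, far below $\tfrac{2n^2}{33}$, so it cannot seed the bound by itself. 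You correctly identify these as the hard points, but identifying them is not the same as resolving them. The paper takes an entirely different, elementary route that avoids all of this: fix a \emph{maximum} family $\mathcal{T}$ of vertex-disjoint triangles covering $3tn$ vertices, so the remainder $G'$ is triangle-free with $e(G')\le\tfrac{(1-3t)^2n^2}{4}$; count saturating edges incident to $V(\mathcal{T})$ via vertices with two neighbours in a triangle (Lemma~\ref{r1}), pick the triangle $T$ sending the most edges to $G'$ so that $|N_2(T)|$ is large (Lemma~\ref{bigtriangle}), and split into the case where all three of $A=N_{G'}(xy)$, $B=N_{G'}(yz)$, $C=N_{G'}(xz)$ are nonempty (where maximality of $\mathcal{T}$ forces $N_x,N_y,N_z$ to be independent, Lemmas~\ref{3-book} and~\ref{no-3-book}) and the case where one is empty; a single-variable optimization in $t$ then yields $\tfrac{2n^2}{33}-\tfrac{3n}{11}$ exactly. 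If you want to complete your own approach you must supply the symmetrization-controls-$f$ argument, which is precisely the part with no known standard tool behind it.
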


We shall prove the following theorem, which implies the lower bound in Theorem~\ref{2/33}.
\begin{theorem}\label{hoho}
Let $G$ be an $n$-vertex $K_4$-free graph with $\lfloor n^2/4\rfloor$ edges, for $n\ge 73$. If $G$ contains a triangle, then
$$f(G)\ge \frac{2n^2}{33}-\frac{3n}{11}.$$
This is best possible when $n$ is divisible by 66.
\end{theorem}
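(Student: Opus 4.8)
The plan is to reformulate $f(G)$ through common neighborhoods of edges and then reduce the theorem to a finite extremal optimization whose optimum is exactly $2/33$. I would first isolate two observations that drive everything. For an edge $ab\in E(G)$ write $S_{ab}:=N(a)\cap N(b)$. Since $G$ is $K_4$-free, $S_{ab}$ is an \emph{independent} set (two adjacent vertices of $S_{ab}$ would, with $a,b$, span a $K_4$); hence every pair inside $S_{ab}$ is a non-edge whose addition creates a $K_4$, and conversely every saturating edge $uv$ lies in some $S_{ab}$. Thus
\[ f(G)=\Bigl|\bigcup_{ab\in E(G)}\binom{S_{ab}}{2}\Bigr|. \]
Second, for a fixed triangle $xyz$ the sets $S_{xy},S_{yz},S_{zx}$ are pairwise disjoint, since a vertex in two of them would be a common neighbor of all of $x,y,z$ and complete a $K_4$; their pair-sets are therefore disjoint, giving
\[ f(G)\ \ge\ \binom{s_{xy}}{2}+\binom{s_{yz}}{2}+\binom{s_{zx}}{2},\qquad s_{xy}+s_{yz}+s_{zx}\ \ge\ \deg x+\deg y+\deg z-n, \]
where $s_{xy}=|S_{xy}|$, etc., and the last inequality is inclusion--exclusion on $N(x)\cup N(y)\cup N(z)\subseteq V(G)$ (the triple intersection is empty).

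This \emph{local} bound already settles the rich case. I would choose a triangle maximizing $\deg x+\deg y+\deg z$ and an edge maximizing $|S_{ab}|$. If some edge lies in a very large book, say $|S_{ab}|\ge cn$ with $c$ slightly above $2/\sqrt{33}\approx0.348$, then the single term $\binom{|S_{ab}|}{2}$ already exceeds $2n^2/33$. The genuine difficulty is the complementary \emph{sparse-book} regime, where every edge lies in few triangles; there the one-triangle bound is too weak and I would bring in the global hypotheses $e(G)=\lfloor n^2/4\rfloor$ and $K_4$-freeness, the latter forcing each neighborhood $N(v)$ to be triangle-free and so to span at most $\deg(v)^2/4$ edges. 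A subtle but essential point I discovered while testing candidate extremal graphs is that a ``free'' bipartite backbone carrying no saturating edges necessarily \emph{loses} a linear amount of the edge budget per triangle vertex, so reaching exactly $\lfloor n^2/4\rfloor$ while keeping the books balanced is precisely what is constrained; this trade-off, not a naive gadget, is what produces the constant $2/33$ rather than something smaller.

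The heart of the proof is to turn these constraints into a low-dimensional optimization. I would split $V(G)$ according to whether a vertex lies in a triangle: the triangle-free part behaves bipartitely, contributing no saturating edges but only a controlled number of edges, which pins down how much of the edge budget the triangle part must carry. Because that part is dense and $K_4$-free, its edges organize into books with independent pages, and the union bound above becomes a weighted sum of $\binom{\cdot}{2}$'s over page sizes; writing all sizes as fractions of $n$, the edge count becomes one quadratic constraint and $f(G)/n^2$ a sum of squares, whose constrained minimum is $2/33$. The hard part will be twofold: controlling the \emph{overlaps} among the pair-sets of different books (the clean disjointness holds only within one triangle), so the union is not overcounted; and showing a near-minimal $G$ is structurally close to the optimizer of the continuous relaxation, so the bound is tight up to the stated $O(n)$ error and not merely $2/33-o(1)$. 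I expect the overlap control, realized through a stability argument that forces $G$ into the shape of the construction below, to be the main obstacle.

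For the matching upper bound I would exhibit an explicit extremal graph and check the three quantities directly. It is a blow-up of a fixed small $K_4$-free graph containing a triangle: a bounded collection of independent parts joined in a fixed pattern, with part sizes equal to the minimizer of the optimization above. Taking $66\mid n$ makes every optimal part size an integer, which is the source of the divisibility hypothesis. One then verifies by hand that the blow-up is $K_4$-free, has exactly $\lfloor n^2/4\rfloor$ edges, and has precisely $\tfrac{2n^2}{33}-\tfrac{3n}{11}$ saturating edges --- exactly the pairs lying inside the independent book pages --- certifying that the lower bound is best possible.
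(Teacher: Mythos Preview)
Your local observation is correct and is also used in the paper: for a triangle $xyz$ the sets $S_{xy},S_{yz},S_{zx}$ are pairwise disjoint independent sets, so $f(G)\ge\binom{s_{xy}}{2}+\binom{s_{yz}}{2}+\binom{s_{zx}}{2}$. But this single-triangle bound cannot by itself reach $2n^2/33$. You give no lower bound on $\deg x+\deg y+\deg z$ for the best triangle, and even if one grants $s_{xy}+s_{yz}+s_{zx}\ge 6n/11$ (the value it takes in the extremal construction), the convex sum is minimized when the three terms are equal, yielding only about $3\binom{2n/11}{2}\approx 6n^2/121<2n^2/33$. Your plan supplies no device to exclude this balanced configuration or to add to the count when it occurs; the promised ``finite optimization'' and ``stability argument'' are left entirely unspecified, and you yourself flag the overlap control as an unresolved obstacle. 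That is precisely where the content of the proof lies.

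The paper fills this gap with an ingredient absent from your outline: a \emph{maximum} family $\mathcal{T}$ of $tn$ vertex-disjoint triangles. This single move buys three things. First, a second source $r_1n^2\ge (t/2-3t^2/4)n^2-O(n)$ of saturating edges, obtained by a telescoping count over the triangles of $\mathcal{T}$. Second, the residual graph $G'=G-V(\mathcal{T})$ is triangle-free, so Mantel bounds $e(G')$ and pigeonhole forces some $T\in\mathcal{T}$ to send at least $(3/2-21t/4)n$ edges into $G'$, hence $p_2:=(s_{xy}+s_{yz}+s_{zx})/n\ge 1/2-9t/4$ (counting only common neighbours in $G'$). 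Third --- and this is exactly what disposes of the balanced case --- maximality of $\mathcal{T}$ implies that if all three of $S_{xy},S_{yz},S_{zx}$ are nonempty then the full neighbourhoods $N_{G'}(x),N_{G'}(y),N_{G'}(z)$ are independent (an edge inside one, together with a vertex from the opposite nonempty $S$, would yield a second vertex-disjoint triangle), and this strengthened local bound finishes that case directly. In the remaining unbalanced case one has $r_2n^2\ge \tfrac14 p_2^2 n^2-O(n)$, and $(t/2-3t^2/4)+\tfrac14(1/2-9t/4)^2$ minimized over $t$ equals $2/33$, attained at $t=2/33$. Neither the packing parameter $t$, the telescoping $r_1$ count, nor the maximality-forces-independence step appears in your proposal, and without them the sparse-book regime is not handled.
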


\begin{proof}
[Proof of Theorem~\ref{2/33}] The upper bound is by the construction described in Section~\ref{upper}. For the lower bound, let $G$ be a $K_4$-free graph with $\lfloor n^2/4\rfloor+1$ edges. By Mantel's theorem, it contains a triangle. Let $G'$ be a subgraph obtained from $G$ by removing an edge such that $G'$ contains a triangle. By Theorem~\ref{hoho}, $f(G')\ge \frac{2n^2}{33}-\frac{3n}{11}$. The relation $f(G)\ge f(G')$ completes the proof.
\end{proof}

\noindent\textbf{Remark:} (i) Slight modification of our proof gives the following stability result: Given any $K_4$-free graph $G$ with $(1-o(1))n^2/4$ edges, if $G$ contains a triangle, then $f(G)\ge (1-o(1))2n^2/33$.

\medskip

\noindent (ii) Unlike the case about the number of triangles in~\cite{E-R} and~\cite{L-S}, where every additional edge, up to $n/2$, gurantees $\lfloor n/2\rfloor$ additional triangles, in our problem, even with linear many extra edges, the number of $K_4$-saturating edges is still at most $(1+o(1))2n^2/33$. In particular, $f\left(n,\lfloor\frac{n^2}{4}\rfloor+t\right)=\frac{2n^2}{33}+O(n)$ for $1\le t\le \frac{n}{66}$.

\medskip

\noindent (iii) One might define a $K_{r}$-saturating edge of a graph $G$, for $r\ge 5$, as we did for $K_4$. Denote by $\ex(n,K_{r-1})$ the maximum size of an $n$-vertex $K_{r-1}$-free graph. We think that a similar phenomenon holds: if $G$ is $K_{r}$-free and $e(G)=\ex(n,K_{r-1})+1$, then the number of $K_{r}$-saturating edges is at least $\left(\frac{2(r-3)^2}{(r-1)(4r^2-19r+23)}+o(1)\right)n^2$. A straightforward generalization of our construction shows that if the conjecture is true, then it is best possible. Some of the ideas of our proof works for $r\ge 5$ as well, but some does not.

\medskip

The paper is organized as follows: We give a construction for the upper bound in Theorem~\ref{2/33} and an extremal example for Theorem~\ref{hoho} in Section~\ref{upper}. The proof for Theorem~\ref{hoho} is given in Section~\ref{lower}.  We will omit floors and ceilings when it is not critical and we make no effort optimizing some of the constants.
%%%%%%%%%%%%%%%%%%%%%%%%%%%%%%%%%%%%%%%%%%%%%%%%%%%%%
%%%%%%%%%%%%%%%%%%%%%%%%%%%%%%%%%%%%%%%%%%%%%%%%%%%%%
%%%%%%%%%%%%%%%%%%%%%%%%%%%%%%%%%%%%%%%%%%%%%%%%%%%%%
%%%%%%%%%%%%%%%%%%%%%%%%%%%%%%%%%%%%%%%%%%%%%%%%%%%%%
%%%%%%%%%%%%%%%%%%%%%%%%%%%%%%%%%%%%%%%%%%%%%%%%%%%%%
\section{Upper bound constructions}\label{upper}
Fix an integer $n$ divisible by 66. We present an $n$-vertex $K_4$-free graph $H$ with $n^2/4+n/66$ edges and $f(H)=2n^2/33-7n/33$. Note that from this graph one can easily remove $n/66-1$ edges without changing the number of $K_4$-saturating edges.
We also give an extremal example showing the bound in Theorem~\ref{hoho} is best possible.

\noindent\textbf{Construction for Theorem~\ref{2/33}:} To construct $H$, start with a $C_5$ on $\{v_1,v_2,v_3,v_4,v_5\}$ with a chord $v_1v_3$. Blow up each $v_i$ to an independent set $V_i$ of the following size: $|V_1|=|V_3|=16n/66$, $|V_2|=4n/66+1$, $|V_4|=15n/66$ and $|V_5|=15n/66-1$, see Figure~\ref{fig-0}. Then $H$ is $K_4$-free with $n^2/4+n/66$ edges. The only $K_4$-saturating edges are those in $V_1,V_2,V_3$, which gives $f(H)=2n^2/33-7n/33$.

\begin{figure}[ht]
\begin{center}
\includegraphics{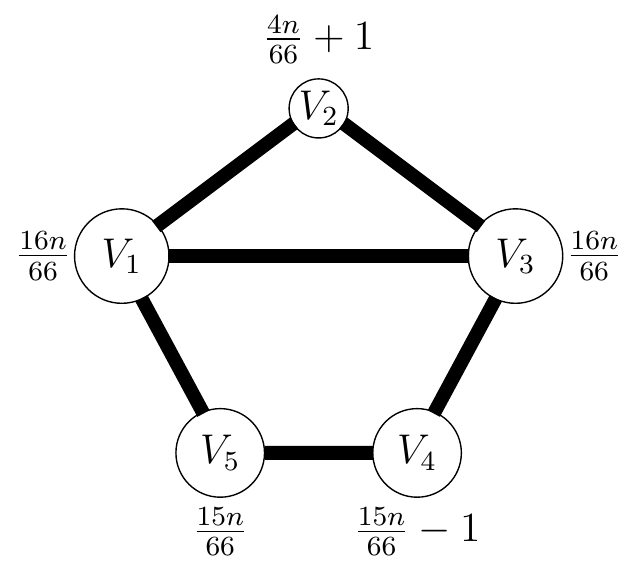}
\end{center}
\caption{A $K_4$-free graph $H$ with $e(H)=\frac{n^2}{4}+\frac{n}{66}$ and $f(H)=\frac{2n^2}{33}-\frac{7n}{33}$.}
\label{fig-0}
\end{figure}

\noindent\textbf{Construction for Theorem~\ref{hoho}:} Define $H'$ the same way as $H$, except that $|V_2'|=4n/66$ and $|V_4'|=15n/66$. This graph is $K_4$-free with $n^2/4$ edges and $f(H')= \frac{2n^2}{33}-\frac{3n}{11}$.

%%%%%%%%%%%%%%%%%%%%%%%%%%%%%%%%%%%%%%%%%%%%%%%%%%%
%%%%%%%%%%%%%%%%%%%%%%%%%%%%%%%%%%%%%%%%%%%%%%%%%%%%%
%%%%%%%%%%%%%%%%%%%%%%%%%%%%%%%%%%%%%%%%%%%%%%%%%%%%%
%%%%%%%%%%%%%%%%%%%%%%%%%%%%%%%%%%%%%%%%%%%%%%%%%%%%%
%%%%%%%%%%%%%%%%%%%%%%%%%%%%%%%%%%%%%%%%%%%%%%%%%%%%%
\section{Proof of Theorem~\ref{hoho}}\label{lower}
Let $G$ be a $K_4$-free graph with $n^2/4$ edges and containing a triangle. Fix, in $G$, a maximum family of vertex-disjoint triangles, say $\mathcal{T}=\{T_1,T_2,...,T_{tn}\}$, where $0< t\le 1/3$. We write $V(\mathcal{T})$ for $\bigcup_{i=1}^{tn}V(T_i)$, $E(\mathcal{T})$ for $E(G[V(\mathcal{T})])$ and $e(\mathcal{T}):=|E(\mathcal{T})|$. Let $G'=G-V(\mathcal{T})$, since $\mathcal{T}$ is of maximum size, $G'$ is a $K_3$-free graph with $e(G')\le\frac{(1-3t)^2n^2}{4}$. Denote by  $r_1n^2$ the number of $K_4$-saturating edges incident to $V(\mathcal{T})$, and by $r_2n^2$ the number of $K_4$-saturating edges in $V(G')$. Hence $f(G)=(r_1+r_2)n^2$. First we give a lower bound on $r_1$.

\begin{lemma}\label{r1}
$$r_1n^2\ge \left(\frac{1}{4}-t+\frac{3t^2}{2}\right)n^2-e(G')-\frac{3}{2}tn\ge \left(\frac{t}{2}-\frac{3t^2}{4}\right)n^2-\frac{3}{2}tn.$$
\end{lemma}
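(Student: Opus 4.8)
The plan is to prove the two inequalities separately: the second is immediate from Mantel's theorem applied to the triangle-free graph $G'$, so the real work is the first. The engine of the whole argument is a single local consequence of $K_4$-freeness: if $T_i=\{a,b,c\}$ is one of the fixed triangles and $v\notin V(T_i)$ has \emph{exactly two} neighbours in $T_i$, say $a$ and $b$, then $vc\notin E(G)$ (otherwise $\{v,a,b,c\}$ would be a $K_4$) and $vc$ is $K_4$-saturating, since adding it completes a $K_4$ on $\{v,a,b,c\}$. Writing $d_i(v):=|N(v)\cap V(T_i)|$, every incidence with $d_i(v)=2$ thus produces a saturating edge from $v$ to the unique non-neighbour of $v$ in $T_i$; these are exactly the saturating edges I will harvest, splitting them according to whether the second endpoint lies in $V(G')$ or in $V(\mathcal{T})$.

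First I would count the saturating edges between $V(G')$ and $V(\mathcal{T})$. For a fixed triangle $T_i$ put $x_i:=|\{v\in V(G'):d_i(v)=2\}|$; each such $v$ yields a distinct cross saturating edge, and since the $V(\mathcal{T})$-endpoint lies in a unique triangle there is no double counting across triangles, so the number of cross saturating edges is at least $\sum_i x_i$. Bounding $x_i\ge |E(V(T_i),V(G'))|-|V(G')|$ (i.e.\ discarding the vertices with $d_i(v)=1$) and summing over the $tn$ triangles, using $\sum_i|E(V(T_i),V(G'))|=|E(V(\mathcal{T}),V(G'))|$ and $|V(G')|=(1-3t)n$, I get at least $|E(V(\mathcal{T}),V(G'))|-t(1-3t)n^2$ cross saturating edges.

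Next I would count the saturating edges inside $V(\mathcal{T})$ in the same way, setting $y_i:=|\{u\in V(\mathcal{T})\setminus V(T_i):d_i(u)=2\}|$. Here an internal saturating edge $\{u,w\}$ with $u\in V(T_j)$ and $w\in V(T_i)$ can be generated from both $T_i$ and $T_j$, so it is counted at most twice; hence the number of internal saturating edges is at least $\tfrac12\sum_i y_i$. Bounding $y_i\ge |E(V(T_i),V(\mathcal{T})\setminus V(T_i))|-(|V(\mathcal{T})|-3)$ and using $\sum_i|E(V(T_i),V(\mathcal{T})\setminus V(T_i))|=2(e(\mathcal{T})-3tn)$ (each inter-triangle edge is seen from both endpoints) yields at least $e(\mathcal{T})-\tfrac32 t^2n^2-\tfrac32 tn$ internal saturating edges.

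Finally I would add the two estimates and substitute the edge-count identity $e(\mathcal{T})+|E(V(\mathcal{T}),V(G'))|+e(G')=e(G)=n^2/4$; the $e(\mathcal{T})$ and cross terms then assemble precisely into $\left(\tfrac14-t+\tfrac32 t^2\right)n^2-e(G')-\tfrac32 tn$, which is the first inequality, and the second follows by inserting $e(G')\le(1-3t)^2n^2/4$ and simplifying. The hard part is the internal count: the factor-$\tfrac12$ loss from its unavoidable double counting must be compensated exactly, which is why one must balance the $+3t^2n^2$ coming from the cross edges against the $-\tfrac32 t^2n^2$ from the internal edges, and why the linear slack has to be tracked honestly (retaining the $-3$ in $|V(\mathcal{T})\setminus V(T_i)|=3tn-3$ is precisely what turns a crude $-3tn$ into the claimed $-\tfrac32 tn$). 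The only genuinely wasteful step is discarding the vertices with $d_i(v)=1$, and it is reassuring that this loss is exactly what the statement budgets for.
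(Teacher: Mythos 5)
Your proof is correct, and it rests on exactly the same engine as the paper's: every vertex with exactly two neighbours in a triangle $T_i$ donates one $K_4$-saturating edge to the third vertex, and $K_4$-freeness caps all such degrees at $2$, so the number of degree-$2$ vertices is at least (edges out of $T_i$) minus (vertices available). The only real difference is the bookkeeping used to keep the harvested edges distinct. The paper orders the triangles and sets $t_i=e(T_i,\,G\setminus\bigcup_{j\le i}T_j)$, counting only ``forward'' incidences; this makes every harvested edge automatically distinct and the whole bound drops out of the single telescoping identity $\sum_i t_i=e(G)-e(G')-3tn$. You instead count symmetrically, splitting into cross ($V(\mathcal{T})$--$V(G')$) and internal (triangle--triangle) saturating edges and paying a factor $\tfrac12$ for the double counting in the internal part; since $\sum_i|E(V(T_i),V(\mathcal{T})\setminus V(T_i))|$ counts each inter-triangle edge twice, the factor $\tfrac12$ is exactly compensated and your loss term $t(1-3t)n^2+\tfrac12\,tn(3tn-3)$ equals the paper's $\sum_i(n-3i)$, so the two routes land on the identical bound, including the linear term. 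Your version is slightly longer but makes the double-counting issue explicit rather than designing it away; either is a complete proof.
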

\begin{proof}
Let $t_i=e(T_i,G\setminus \bigcup_{j=1}^{i}T_j)$, clearly $\sum_{i=1}^{tn} t_i=e(G)-e(G')-3tn$. Since $G$ is $K_4$-free, every vertex can have at most two neighbors in each triangle. Thus $t_i-(n-3i)$ is a lower bound on the number of vertices in $G\setminus \bigcup_{j=1}^{i}T_j$ having degree 2 in $T_i$, each of which gives a $K_4$-saturating edge. Indeed, say $V(T_1)=\{x,y,z\}$, and $w\in N(x)\cap N(y)$, then $wz$ is a $K_4$-saturating edge. Thus,
\begin{eqnarray*}
r_1n^2&\ge& \sum_{i=1}^{tn} (t_i-(n-3i))=(e(G)-e(G')-3tn)-\left(tn^2-3\frac{tn(tn+1)}{2}\right)\\
&\ge&\left(\frac{1}{4}-t+\frac{3t^2}{2}\right)n^2-e(G')-\frac{3}{2}tn\ge \left(\frac{t}{2}-\frac{3t^2}{4}\right)n^2-\frac{3}{2}tn,
\end{eqnarray*}
where the last inequality follows from $e(G')\le\frac{(1-3t)^2n^2}{4}$.
\end{proof}

Let $T_i\in\mathcal{T}$ be a triangle in $\mathcal{T}$. Denote by $N_j(T_i)\subseteq V(G')$, for $0\le j\le 3$, the set of vertices in $G'$ that has exactly $j$ neighbors in $T_i$. Since $G$ is $K_4$-free, $N_3(T_i)=\emptyset$, for every $T_i$'s. Further define $p_0(T_i)=\frac{|N_0(T_i)|}{n}$, $p_1(T_i)=\frac{|N_1(T_i)|}{n}$ and $p_2(T_i)=\frac{|N_2(T_i)|}{n}$. Thus by definition, $p_0(T_i)+p_1(T_i)+p_2(T_i)=1-3t$.

The next lemma shows that there is a triangle $T\in\mathcal{T}$ with large $|N_2(T)|$.

\begin{lemma}\label{bigtriangle}
There exists a triangle $T\in \mathcal{T}$, such that\\
(i) $e(T,G')\ge \left(\frac{3}{2}-\frac{21t}{4}\right)n$, and\\
(ii) $p_2(T)\ge \frac{1}{2}-\frac{9t}{4}+p_0(T)$.
\end{lemma}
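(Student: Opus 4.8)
The plan is to obtain both conclusions from a single averaging argument: I will show that the \emph{average} value of $e(T_i,G')$ over the $tn$ triangles already meets the bound in (i), so some triangle $T$ satisfies (i); and then argue that (ii) is not a separate statement but a deterministic consequence of (i) for the very same $T$, via a bookkeeping identity between $e(T,G')$ and the parameters $p_0(T),p_2(T)$.

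First I would count the edges incident to $V(\mathcal{T})$. Partitioning $V(G)=V(\mathcal{T})\cup V(G')$, every edge either lies inside $V(\mathcal{T})$, inside $V(G')$, or is a cross edge whose $V(\mathcal{T})$-endpoint lies in a unique triangle, so
$$\sum_{i=1}^{tn} e(T_i,G') = e(G)-e(\mathcal{T})-e(G').$$
To bound the right-hand side from below I would bound the two subtracted terms from above. The set $V(\mathcal{T})$ spans $3tn$ vertices and induces a $K_4$-free graph, so Tur\'an's theorem gives $e(\mathcal{T})\le (3tn)^2/3=3t^2n^2$; and we already have $e(G')\le (1-3t)^2n^2/4$. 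Substituting $e(G)=n^2/4$ and simplifying, the sum is at least $\left(\frac{3t}{2}-\frac{21t^2}{4}\right)n^2$, so dividing by the number of triangles $tn$ shows the average of $e(T_i,G')$ is at least $\left(\frac{3}{2}-\frac{21t}{4}\right)n$. Hence some $T\in\mathcal{T}$ realizes this, which is exactly (i).

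For (ii) I would use that $N_3(T)=\emptyset$, so $e(T,G')=|N_1(T)|+2|N_2(T)|=(p_1(T)+2p_2(T))n$; combining this with $p_0(T)+p_1(T)+p_2(T)=1-3t$ eliminates $p_1(T)$ and yields $e(T,G')/n = 1-3t-p_0(T)+p_2(T)$, i.e. $p_2(T)=e(T,G')/n-1+3t+p_0(T)$. Feeding in the bound from (i) gives $p_2(T)\ge \frac{1}{2}-\frac{9t}{4}+p_0(T)$, so the triangle chosen for (i) automatically satisfies (ii). The only real point of care is the Tur\'an estimate on $e(\mathcal{T})$: the coefficient $3t^2$ is exactly what makes the averaged bound in (i) survive, so a cruder estimate would not suffice. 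Beyond that there is no genuine obstacle, the content being precisely that (ii) follows from (i) rather than demanding its own extremal triangle.
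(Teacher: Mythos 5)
Your proposal is correct and follows essentially the same route as the paper: the same edge partition $e(G)=e(\mathcal{T})+e(\mathcal{T},G')+e(G')$, the same averaging over the $tn$ triangles for (i), and the same identity $e(T,G')/n=p_1(T)+2p_2(T)$ combined with $p_0(T)+p_1(T)+p_2(T)=1-3t$ to deduce (ii) for the triangle found in (i). The only (immaterial) difference is that you bound $e(\mathcal{T})\le 3t^2n^2$ by Tur\'an's theorem applied to the $K_4$-free graph on $V(\mathcal{T})$, whereas the paper gets the identical bound from the observation that any two triangles of $\mathcal{T}$ span at most $6$ cross edges.
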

\begin{proof}
(i) The edge set of $G$ can be partitioned into $E(G'), E(\mathcal{T}, G')$ and $E(\mathcal{T})$. Notice that since $G$ is $K_4$-free, there are at most $6$ edges between any pair of triangles in $\mathcal{T}$. Hence $e(\mathcal{T})\le 3tn+6{tn\choose 2}=3t^2n^2$.

Thus we have $e(\mathcal{T},G')=e(G)-e(G')-e(\mathcal{T})\ge \frac{n^2}{4}-\frac{(1-3t)^2n^2}{4}-3t^2n^2\ge \left(\frac{3t}{2}-\frac{21t^2}{4}\right)n^2$. Therefore, there exists a triangle $T\in\mathcal{T}$ with $e(T,G')\ge e(\mathcal{T},G')/(tn)\ge \left(\frac{3}{2}-\frac{21t}{4}\right)n$.

\noindent (ii) Let $T\in\mathcal{T}$ be a triangle satisfying (i). Note that $2p_2(T)+p_1(T)=\frac{e(T,G')}{n}$. Using $p_0(T)+p_1(T)+p_2(T)=1-3t$, we have $p_2(T)-p_0(T)\ge \frac{3}{2}-\frac{21t}{4}-(1-3t)=\frac{1}{2}-\frac{9t}{4}$.
\end{proof}

From now on, we let $T=\{x,y,z\}$ be a triangle in $\mathcal{T}$ sending the most edges to $G'$, hence
it has the two properties of Lemma~\ref{bigtriangle}. For brevity we write $p_j=p_j(T)$ and $N_i=N_i(T)$ for $0\le j\le 2$. Furthermore, define $A=N_{G'}(xy)$, $B=N_{G'}(yz)$ and $C=N_{G'}(xz)$. Note that $A,B,C$ are pairwise disjoint independent sets, otherwise $T\cup A\cup B\cup C$ contains a copy of $K_4$. Define $N_x:=N_{G'}(x),N_y:=N_{G'}(y)$ and $N_z:=N_{G'}(z)$. Let $a=\frac{|A|}{|N_2|}$, $b=\frac{|B|}{|N_2|}$ and $c=\frac{|C|}{|N_2|}$, thus $a+b+c=1$. For $1\le k\le 3$, we say that $T$ spans a $k$-\emph{joint-book}, if among $A,B,C$, exactly $3-k$ of them are empty sets.

\begin{lemma}\label{3-book}
If $T$ spans a 3-joint-book, then we have
$$r_2n^2\ge \frac{1}{6}\left[\frac{3}{2}-\frac{21t}{4}\right]^2n^2-e(\overline{G'})-(1-3t)n.$$
\end{lemma}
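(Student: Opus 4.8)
The plan is to read the saturating edges off from the three books of $T$, and then to convert the edge count $e(T,G')$ into a quadratic lower bound by convexity, using the term $e(\overline{G'})$ to absorb the inevitable slack. First I would use that $A,B,C$ are independent: for $u,v\in A$ the set $\{x,y,u,v\}$ becomes a $K_4$ the moment $uv$ is added, so every pair inside $A$ (and likewise inside $B$ and $C$) is a $K_4$-saturating edge lying in $V(G')$, and is therefore counted by $r_2n^2$. Hence $r_2n^2\ge\binom{|A|}2+\binom{|B|}2+\binom{|C|}2$. Since these pairs are also genuine non-edges of $G'$, they simultaneously contribute to $e(\overline{G'})$; this double role is the mechanism that should let the $-e(\overline{G'})$ term in the statement pay for what the books of $T$ alone cannot reach.

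Summing over $A,B,C$ only yields $\tfrac16|N_2|^2$, whereas the target carries $\tfrac16 s^2n^2$ with $s:=\tfrac32-\tfrac{21t}4$, and $|N_2|=p_2n$ is typically smaller than $sn$. To recover the factor $\tfrac16 s^2n^2$ I would instead sum over the three $G'$-neighbourhoods $D_x:=N_{G'}(x)$, $D_y$, $D_z$, whose sizes satisfy $|D_x|+|D_y|+|D_z|=e(T,G')\ge sn$ by Lemma~\ref{bigtriangle}(i), so that convexity gives $\sum_{w}\binom{|D_w|}2\ge\tfrac16 s^2n^2-O(n)$. The bookkeeping identity $\sum_{w}\binom{|D_w|}2=\sum_{\{u,v\}}c(u,v)$, where $c(u,v):=|N_G(u)\cap N_G(v)\cap V(T)|\in\{0,1,2\}$ (the value $3$ being excluded since $N_3(T)=\emptyset$), then lets me classify pairs by their number of common neighbours in $T$.

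A pair with $c(u,v)=2$ lies inside one of $A,B,C$, hence is a non-edge and is saturating; writing $P_2$ for their number gives $r_2n^2\ge P_2$. Moreover any edge of $G'$ has $c\le 1$, since two common neighbours in $T$ would already complete a $K_4$ with $uv$. Splitting the weight-$1$ pairs into non-edges (at most $e(\overline{G'})$ of them) and edges of $G'$, and writing $F$ for the number of edges of $G'$ whose two endpoints share a neighbour in $T$, the identity rearranges to $r_2n^2\ge P_2\ge\tfrac16 s^2n^2-e(\overline{G'})-F-O(n)$. Thus the whole statement reduces to absorbing $F$. Since $e(T,G')$ may exceed its guaranteed value $sn$, one in fact has $\sum_w\binom{|D_w|}2\ge\tfrac16 (e(T,G')/n)^2n^2-O(n)$, and the task becomes to show $F\le\tfrac16 ((e(T,G')/n)^2-s^2)n^2+(1-3t)n+O(n)$; that is, the edges of $G'$ meeting a common $T$-neighbour must be paid for by the surplus of $e(T,G')$ over $sn$ together with the linear term.

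The step I expect to be the real difficulty is exactly this control of $F$. A useful structural fact is that a saturating edge of $V(G')$ can never use an edge of $G'$ as the ``spine'' of its $K_4$: if $pq\in E(G')$ were a common edge of two vertices $u,w$, then $p,q$ would be adjacent with a common neighbour $u$, contradicting the triangle-freeness of $G'$. Hence every saturating edge counted here has its spine reaching into $V(\mathcal{T})$, and the edges counted by $F$ are genuinely not captured by the book of $T$. To close the gap I would try to show that each edge of $G'$ with a neighbour in $T$ either forces additional edges from $T$, or from another triangle of $\mathcal{T}$, into $G'$, thereby increasing $e(T,G')$ and hence the main term, or else itself spawns a saturating non-edge through a common neighbour in $V(\mathcal{T})$. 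Making this trade-off quantitative, and verifying that the $3$-joint-book hypothesis is precisely what keeps $F$ under control, is the crux of the argument.
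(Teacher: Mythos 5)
Your framework is the paper's own: you sum $\binom{|N_{G'}(w)|}{2}$ over $w\in\{x,y,z\}$, apply convexity together with $e(T,G')\ge\left(\tfrac{3}{2}-\tfrac{21t}{4}\right)n$ from Lemma~\ref{bigtriangle}(i), and classify pairs of $V(G')$ by their number of common neighbours in $T$, so that the weight-$2$ pairs are saturating and the weight-$1$ non-edges are absorbed by $e(\overline{G'})$. But the one quantity you leave unresolved --- $F$, the number of edges of $G'$ whose endpoints share a neighbour in $T$ --- is exactly where the lemma's hypotheses do their work, and no quantitative trade-off is needed: $F=0$. An edge $uv$ counted by $F$ lies inside one of $N_{G'}(x),N_{G'}(y),N_{G'}(z)$, say $u,v\in N_{G'}(x)$, so $\{x,u,v\}$ is a triangle in $G$. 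The $3$-joint-book hypothesis supplies some $w\in B=N_{G'}(yz)$, and $w\notin\{u,v\}$ because a vertex of $B\cap N_{G'}(x)$ would lie in $N_3(T)=\emptyset$. Then $\{x,u,v\}$ and $\{y,z,w\}$ are two vertex-disjoint triangles, each disjoint from every other triangle of $\mathcal{T}$ since $u,v,w\in V(G')$; replacing $T$ by this pair enlarges the maximum vertex-disjoint triangle family $\mathcal{T}$, a contradiction. Hence $N_{G'}(x),N_{G'}(y),N_{G'}(z)$ are independent sets, every pair inside them is a non-edge, and your displayed chain $r_2n^2\ge P_2\ge\tfrac16 s^2n^2-e(\overline{G'})-F-O(n)$ closes immediately (with the $O(n)$ made explicit as $(1-3t)n$ via $e(T,G')\le 2(1-3t)n$).

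So the gap is genuine but localized. You correctly identified $F$ as the obstruction and even sensed that the $3$-joint-book hypothesis is what must control it, but you missed that the maximality of $\mathcal{T}$ kills $F$ outright; the replacement argument above is a one-line swap of one triangle for two. The alternative you sketch --- showing that each edge counted by $F$ forces extra edges from $T$ or from another triangle of $\mathcal{T}$, to be paid for by the surplus of $e(T,G')$ over $sn$ --- is not how the proof goes and would be substantially harder to carry out, since there is no a priori surplus to spend.
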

\begin{proof}
First notice that $N_x,N_y$ and $N_z$ are all independent sets. Indeed, suppose $N_x$ contains an edge, then $T\cup N_x\cup B$ contains two vertex-disjoint triangles, contradicting the maximality of $\mathcal{T}$.

Note that ${|N_x|\choose 2}+{|N_y|\choose 2}+{|N_z|\choose 2}\le r_2n^2+e(\overline{G'})$. Indeed, every pair of vertices in $N_x,N_y$ or $N_z$ gives a non-edge in $G'$ and those $K_4$-saturating edges in $A,B,C$ are counted twice. Additionally, $|N_x|+|N_y|+|N_z|=e(T,G')\ge\left(\frac{3}{2}-\frac{21t}{4}\right)n$, and $e(T,G')\le 2(1-3t)n$. Thus,
\begin{eqnarray*}
r_2n^2+e(\overline{G'})&\ge& {|N_x|\choose 2}+{|N_y|\choose 2}+{|N_z|\choose 2} \ge  3{e(T,G')/3\choose 2}\\
&=&\frac{1}{6}(e(T,G'))^2-\frac{1}{2}e(T,G')\ge\frac{n^2}{6}\left[\frac{3}{2}-\frac{21t}{4}\right]^2-(1-3t)n.
\end{eqnarray*}
\end{proof}

We first show that if $T$ spans a 3-joint-book, then $f(G)\ge 2n^2/33-3n/11$.

\begin{lemma}\label{no-3-book}
For $n\ge 73$, if $T$ spans a 3-joint-book, then $f(G)\ge \frac{2n^2}{33}-\frac{3n}{11}$.
\end{lemma}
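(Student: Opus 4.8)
The plan is to simply add together the two lower bounds we have already built: the bound on $r_1$ from Lemma~\ref{r1} and the bound on $r_2$ from Lemma~\ref{3-book}, the latter of which is available precisely because we are in the 3-joint-book case. The whole point is that the $r_1$-bound carries a $-e(G')$ term while the $r_2$-bound carries a $-e(\overline{G'})$ term, so their sum can be rewritten using the trivial identity $e(G')+e(\overline{G'})=\binom{(1-3t)n}{2}=\frac{(1-3t)^2}{2}n^2-\frac{1-3t}{2}n$. This removes every dependence on the internal structure of $G'$ and leaves a bound depending only on the single parameter $t$ and on $n$ (note we do not even need the Mantel bound on $e(G')$ at this stage, only the exact complement identity).

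Concretely, I would take the first inequality of Lemma~\ref{r1}, namely $r_1n^2\ge\bigl(\tfrac14-t+\tfrac{3t^2}{2}\bigr)n^2-e(G')-\tfrac32tn$, add it to the bound of Lemma~\ref{3-book}, and substitute the identity above. I expect the quadratic-in-$n$ part to collapse to $g(t)n^2$ with
$$g(t)=\frac18-\frac{5t}{8}+\frac{51t^2}{32},$$
and the various linear contributions — the $-\tfrac32tn$, the $+\tfrac{1-3t}{2}n$ coming from the complement identity, and the $-(1-3t)n$ from Lemma~\ref{3-book} — to telescope cleanly to $-\tfrac{n}{2}$. This would give $f(G)=(r_1+r_2)n^2\ge g(t)n^2-\tfrac{n}{2}$.

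It then remains to minimize $g$ over the admissible range $0<t\le 1/3$. Since $g$ is an upward parabola, its minimum sits at the vertex $t^*=10/51$, which lies in the interval, giving $g(t)\ge g(10/51)=\tfrac{13}{204}$. Because $\tfrac{13}{204}>\tfrac{2}{33}$ there is a genuine $\Theta(n^2)$ surplus, of size $\tfrac{13}{204}-\tfrac{2}{33}=\tfrac{7}{2244}$ per $n^2$. The last step is to check that this surplus beats the linear slack: the desired inequality $\tfrac{13}{204}n^2-\tfrac{n}{2}\ge\tfrac{2}{33}n^2-\tfrac{3n}{11}$ reduces to $\tfrac{7}{2244}n\ge\tfrac{5}{22}$, i.e. $n\ge 72.9\ldots$, which is exactly the source of the hypothesis $n\ge 73$.

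I do not anticipate a serious conceptual obstacle, since the real work is front-loaded into Lemmas~\ref{r1}, \ref{bigtriangle} and~\ref{3-book}. The only delicate point is the algebraic bookkeeping: pinning down the coefficients of $g(t)$ and, in particular, verifying that all the $\Theta(n)$ error terms really do cancel down to a $t$-\emph{independent} $-\tfrac{n}{2}$. It is this clean cancellation, combined with the fact that the quadratic minimum $\tfrac{13}{204}$ strictly exceeds $\tfrac{2}{33}$, that makes the threshold land at exactly $n\ge 73$; if the linear part had retained $t$-dependence one would instead have to minimize a full linear-plus-quadratic expression in $t$, so confirming the telescoping is the step I would double-check most carefully.
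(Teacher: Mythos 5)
Your proposal is correct and follows essentially the same route as the paper: add the first inequality of Lemma~\ref{r1} to Lemma~\ref{3-book}, eliminate $e(G')$ and $e(\overline{G'})$ via the complement identity, and minimize the resulting quadratic $\frac{51t^2}{32}-\frac{5t}{8}+\frac{1}{8}\ge\frac{13}{204}$ over $0<t\le 1/3$, with the $t$-independent $-\frac{n}{2}$ linear term and the threshold $n\ge 73$ all checking out exactly as you computed.
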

\begin{proof}
Note that $e(G')+e(\overline{G'})=\frac{(1-3t)^2n^2}{2}-\frac{(1-3t)n}{2}$. By Lemmas~\ref{r1} and~\ref{3-book}, we have
\begin{eqnarray*}
f(G)&=& (r_1+r_2)n^2\ge \left(\frac{1}{4}-t+\frac{3t^2}{2}\right)n^2-e(G')-\frac{3}{2}tn\\
&+&\frac{1}{6}\left[\frac{3}{2}-\frac{21t}{4}\right]^2n^2-e(\overline{G'})-(1-3t)n\\
&\ge& \left(\frac{51t^2}{32}-\frac{5t}{8}+\frac{1}{8}\right)n^2-\frac{n}{2}\ge\frac{13n^2}{204}-\frac{n}{2}\ge\frac{2n^2}{33}-\frac{3n}{11},
\end{eqnarray*}
since $\frac{51t^2}{32}-\frac{5t}{8}+\frac{1}{8}\ge \frac{13}{204}$ when $0<t\le 1/3$, and the last inequality holds for $n\ge 73$.
%Wolfram: http://www.wolframalpha.com/input/?i=min+1%2F4-%281-3t%29%5E2%2F2-t%2B3t%5E2%2F2%2B%283%2F2-21t%2F4%29%5E2%2F6+subject+to+0%3Ct%3C1%2F3
\end{proof}

\begin{proof}[Proof of Theorem~\ref{hoho}]
By Lemma~\ref{no-3-book}, we may assume that $T$ spans a $k$-joint-book with $k\le 2$. Without loss of generality assume that $B=\emptyset$, i.e. $b=0$. Then $a+c=1$ and $|A|+|C|=p_2n$. Notice that each pair of vertices in $A$ and $C$ is a $K_4$-saturating edge, hence
\begin{eqnarray}\label{r2}
r_2n^2\ge {|A|\choose 2}+{|C|\choose 2}\ge 2{p_2n/2\choose 2}=\frac{p_2^2}{4}n^2-\frac{p_2n}{2}.
\end{eqnarray}
If $t\ge \frac{1}{5}$, then Lemma~\ref{r1} implies $f(G)\ge r_1n^2\ge \left(\frac{t}{2}-\frac{3t^2}{4}\right)n^2-\frac{n}{2}\ge \frac{2n^2}{33}$ for $n\ge 54$. Thus we may assume that $t<\frac{1}{5}$. The right hand side in~\eqref{r2} is minimized when $p_2$ is at its lower bound provided by Lemma~\ref{bigtriangle}, as $\frac{1}{2}-\frac{9t}{4}>\frac{1}{n}$ for $n\ge 20$. Hence
$$r_2n^2\ge \frac{1}{4}\left(\frac{1}{2}-\frac{9t}{4}\right)^2n^2-\frac{1}{2}\left(\frac{1}{2}-\frac{9t}{4}\right)n.$$
Therefore using Lemma~\ref{r1}, we have
\begin{eqnarray*}
f(G)&=& (r_1+r_2)n^2\ge\left(\left(\frac{t}{2}-\frac{3t^2}{4}\right)+\frac{1}{4}\left(\frac{1}{2}-\frac{9t}{4}\right)^2\right)n^2-\frac{1}{2}\left(3t+\frac{1}{2}-\frac{9t}{4}\right)n\\
&=&\left(\frac{33t^2}{64}-\frac{t}{16}+\frac{1}{16}\right)n^2-\frac{1}{2}\left(\frac{3t}{4}+\frac{1}{2}\right)n\ge \frac{2n^2}{33}-\frac{3n}{11}-\frac{3}{44},
%&=&\left(\frac{33t^2}{64}-\frac{t}{16}+\frac{1}{16}\right)n^2-\frac{n}{2}(3t+\frac{1}{2}-\frac{9t}{4})\ge \frac{2n^2}{33}-\frac{9n}{33},
\end{eqnarray*}
where the function on the right hand side is minimized at $t=\frac{2}{33}+\frac{4}{11n}$. Since both $tn$ and $f(G)$ are integers, checking all $n$ modulo 33, we have
$$f(G)\ge \frac{2n^2}{33}-\frac{3n}{11}.$$
\end{proof}

%%%%%%%%%%%%%%%%%%%%%%%%%%%%%%%%%%%%%%%%%%%%%%%%%%%%%%%%%%%%%%%%%%%%%%%%
%%%%%%%%%%%%%%%%%%%%%%%%%%%%%%%%%%%%%%%%%%%%%%%%%%%%%%%%%%%%%%%%%%%%%%%%
\end{document}